\documentclass[11pt]{article}
\usepackage[a4paper,margin=1in]{geometry}
\usepackage[T1]{fontenc}
\usepackage[utf8]{inputenc}
\usepackage{lmodern}
\usepackage{amsmath,amssymb,amsthm,mathtools,bm}
\usepackage{booktabs,array}
\usepackage{microtype}
\usepackage{enumitem}
\usepackage[hidelinks]{hyperref}
\hypersetup{pdftitle={Vorticity-Response Blowup in Axisymmetric Models},pdfauthor={Rishad Shahmurov},pdfsubject={Modified axisymmetric response models and finite-time blowup}}
\usepackage[nameinlink,capitalize]{cleveref}
\newtheorem{theorem}{Theorem}[section]
\newtheorem{lemma}[theorem]{Lemma}
\newtheorem{proposition}[theorem]{Proposition}
\newtheorem{corollary}[theorem]{Corollary}
\theoremstyle{definition}
\newtheorem{remark}[theorem]{Remark}

\newcommand{\R}{\mathbb R}
\newcommand{\T}{\mathbb T}
\newcommand{\A}{\mathcal A}

\newcommand{\ip}[2]{\left\langle #1,#2\right\rangle}

\newcommand{\avg}[1]{\left\langle #1\right\rangle}

\title{\textbf{Vorticity-Response Blowup in Axisymmetric Models}}
\author{Rishad Shahmurov\\
Cellular Products Research and Development, Roswell, Georgia, USA\\
\texttt{rshahmurov@crimson.ua.edu}}
\date{September 2026}

\begin{document}
\maketitle

\begin{abstract}
We study how the orientation and elliptic depth of swirl-generated vorticity response affect singularity formation in axisymmetric model equations.  For a sign-reversed Hou--Li model on the whole line, the dynamics factorize into two real Riccati--diffusion channels.  For every viscosity $\nu>0$, every nontrivial smooth datum in an infinite-dimensional cone of even, nonnegative, monotone channel data develops finite-time amplitude blowup; in the inviscid case, characteristics give the exact blowup time.  For the same smooth swirling inviscid datum, the standard response sign is global whereas the reversed sign blows up.  We then introduce a three-parameter 5D response family separating transport geometry, response orientation, and elliptic depth.  Exact divergence, energy, and scaling identities show that physical three-dimensional incompressibility selects one geometry, weighted 5D incompressibility selects another, and Navier--Stokes scaling makes two elliptic inversions critical.  The frozen physical response is critical and dissipative, while a natural one-inversion 5D-solenoidal model is supercritical and energy-pumping.  These results identify response orientation and causal depth as load-bearing structure in axisymmetric model dynamics.
\end{abstract}

\section{Introduction}

A model equation is most useful when a small structural change produces a large and mathematically interpretable change in dynamics.  Tao's averaged Navier--Stokes equation is a prominent example: the energy cancellation and broad harmonic-analysis bounds are retained, yet a suitable modification of the bilinear interaction admits finite-time blowup \cite{Tao2016}.  The conclusion is not that the modified equation is Navier--Stokes, but that a regularity proof for Navier--Stokes must use structure finer than the properties preserved by the averaging.

Axisymmetric models provide a different setting in which one can ask the same kind of question.  Hou and Li derived a one-dimensional model from the axisymmetric equations near the symmetry axis and identified a strong nonlinear depletion mechanism \cite{HouLi2008}.  Lei and Hou emphasized the stabilizing role of convection in related axisymmetric models \cite{LeiHou2009}; Hou--Liu--Wang studied families in which convection is strengthened \cite{HouLiuWang2018}; and Hou--Wang showed that weakening advection in the Hou--Li model can expose finite-time singularity, including in the viscous problem \cite{HouWang2024}.  Recent whole-line work also emphasizes the dependence of singular and global behavior on the model parameter and boundary normalization \cite{WangLi2026}.  In parallel, the inviscid Hou--Li hierarchy is closely related to generalized two-component Hunter--Saxton systems, for which broad parameter-dependent blowup and global-existence results are known \cite{Wunsch2010,WuWunsch2012,MoonLiu2012,Sarria2015}.

The parameter changed here is different.  We leave the Hou--Li transport, stretching coefficient, viscosity, and Hodge relation unchanged and alter only the orientation of the source by which swirl produces meridional vorticity:
\begin{equation}\label{eq:intro-model}
\begin{aligned}
 u_t+2\psi u_x&=2\psi_xu+\nu u_{xx},\\
 \omega_t+2\psi\omega_x&=\sigma (u^2)_x+\nu\omega_{xx},\\
 -\psi_{xx}&=\omega.
\end{aligned}
\end{equation}
The Hou--Li source sign is $\sigma=1$.  The main result of the one-dimensional part is that every negative sign $\sigma=-c^2<0$ has a robust viscous blowup cone: if
\[
 a_\pm=q\pm cu,\qquad q=\psi_x,
\]
are initially smooth, even, nonnegative, and nonincreasing on $x>0$, and are not both zero, then the corresponding solution blows up in finite time for every $\nu>0$.  The result is therefore not tied to the codimension-one set $q=cu$.  It persists when both real channels are present.

The mechanism is exposed by a factorization.  On the whole line, after fixing the spatial integration constant by decay, one has
\begin{equation}\label{eq:intro-q}
 q_t+2\psi q_x=q^2-\sigma u^2+\nu q_{xx}.
\end{equation}
For $\sigma=c^2>0$, the variable $Z=q+icu$ obeys one complex Riccati--diffusion equation.  For $\sigma=-c^2<0$, the two real variables $a_\pm=q\pm cu$ obey
\begin{equation}\label{eq:intro-split}
 (a_\pm)_t+V(a_\pm)_x=a_\pm^2+\nu(a_\pm)_{xx},
 \qquad V_x=a_++a_-,\qquad V(0,t)=0.
\end{equation}
Inside the positive monotone cone, $V\ge0$ on $x>0$ and $(a_\pm)_x\le0$, so every nonzero channel satisfies
\[
 (a_\pm)_t-\nu(a_\pm)_{xx}\ge a_\pm^2.
\]
A backward-heat-kernel argument then forces finite-time blowup.  The same sign is visible globally in the exact moment identity
\[
 \frac{d}{dt}\int a_\pm^p
 +\nu p(p-1)\int a_\pm^{p-2}(a_\pm)_x^2
 =(p+1)\int a_\pm^{p+1}+\int a_\mp a_\pm^p.
\]
Thus, on this cone, the retained convection reinforces rather than suppresses the positive reaction.

The inviscid system admits an even sharper test.  For the \emph{same} smooth compactly supported swirling data
\[
 q_0=cf,
 \qquad u_0=f,
 \qquad f\ge0,
\]
the standard sign $\sigma=c^2$ is global, while the reversed sign $\sigma=-c^2$ blows up exactly at
\[
 T_*=(2c\|f\|_\infty)^{-1}.
\]
This same-data comparison is not presented as a new classification of the inviscid generalized Hunter--Saxton family; the purpose is to isolate the response orientation without changing domain, transport strength, viscosity coefficient, or initial profile.

The second half of the paper identifies a higher-dimensional analogue.  In the usual 5D axisymmetric variables
\[
 F=\frac{u^\theta}{r},\qquad
 G=\frac{\omega^\theta}{r},\qquad
 -\Delta_5\Phi=G,
\]
the physical source $S=\partial_z(F^2)$ does not determine the meridional velocity instantaneously.  It drives the independent state $G$, which is then inverted through the Hodge operator.  To separate the pieces of that response, we introduce
\begin{equation}\label{eq:intro-5d}
 b_m[\Phi]=(-r\Phi_z,\,m\Phi+r\Phi_r),
 \qquad
 \Phi=-\kappa(-\Delta_5)^{-\beta}\partial_z(F^2).
\end{equation}
The parameter $m$ selects the transport geometry, $\kappa$ the response orientation, and $\beta$ its elliptic depth.  The exact identities
\[
 \operatorname{div}_{3D}b_m=(m-2)\Phi_z,
 \qquad
 \operatorname{div}_{\mu_5}b_m=(m-4)\Phi_z
\]
show that physical 3D incompressibility selects $m=2$, while 5D weighted incompressibility selects $m=4$.  The corresponding scalar energy law is
\begin{equation}\label{eq:intro-energy}
 \frac12\frac{d}{dt}\|F\|_{L^2(d\mu_5)}^2
 +\nu\|\nabla F\|_{L^2(d\mu_5)}^2
 =\frac{m\kappa}{2}
 \|(-\Delta_5)^{-\beta/2}\partial_z(F^2)\|_{L^2(d\mu_5)}^2.
\end{equation}
Navier--Stokes scaling makes $\beta=2$ exactly critical.  The frozen physical source-to-potential response has $(m,\kappa,\beta)=(2,-\nu^{-1},2)$: critical and dissipative in \eqref{eq:intro-energy}.  The one-inversion 5D-solenoidal response has, at the model level, $(m,\kappa,\beta)=(4,1,1)$: it is pumping and supercritical.  The comparison isolates a structural switch in geometry, sign, memory, and operator depth rather than a small coefficient perturbation.

The comparison with Tao's theorem is methodological rather than quantitative.  Tao modifies the full three-dimensional bilinear interaction while preserving its energy cancellation.  Here the setting is a much narrower axisymmetric model hierarchy.  The common message is that coarse invariants are not enough: the orientation and causal depth of the nonlinear response can be load-bearing regularizing structure.

\section{Whole-line normalization and the periodic zero mode}

Set
\[
 q=\psi_x,
 \qquad V=2\psi.
\]
We first fix the precise whole-line problem used throughout the blowup theorem.  We assume that $u$, $q$, and the derivatives required below tend to zero as $|x|\to\infty$, while $\psi$ is allowed to approach finite, possibly different, limits at the two ends.  We normalize $\psi(0,t)=0$.

\begin{proposition}\label{prop:qsystem}
A smooth solution of \eqref{eq:intro-model} with the preceding whole-line normalization satisfies
\begin{align}
 u_t+Vu_x&=2qu+\nu u_{xx},\label{eq:u-q}\\
 q_t+Vq_x&=q^2-\sigma u^2+\nu q_{xx},\label{eq:q-q}\\
 V_x&=2q,\qquad V(0,t)=0.\label{eq:Vq}
\end{align}
Conversely, a smooth decaying solution of \eqref{eq:u-q}--\eqref{eq:Vq}, with $\omega=-q_x$ and $\psi=V/2$, solves \eqref{eq:intro-model}.
\end{proposition}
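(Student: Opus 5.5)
The plan is to differentiate the $\omega$-equation's structure through the Hodge relation. The $u$-equation \eqref{eq:u-q} is immediate: substitute $2\psi=V$ and $2\psi_x=2q$ into the first line of \eqref{eq:intro-model}. Likewise $V_x=2\psi_x=2q$, and $V(0,t)=0$ is just the normalization $\psi(0,t)=0$. The only substantive point is \eqref{eq:q-q}. For it, I will write the $\omega$-equation as a spatial derivative of the $q$-equation and integrate in $x$.

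Since $\omega=-\psi_{xx}=-q_x$, the second line of \eqref{eq:intro-model} reads
\[
 -q_{xt}-2\psi q_{xx}=\sigma(u^2)_x-\nu q_{xxx}.
\]
The key identity is
\[
 \partial_x\bigl(2\psi q_x - q^2\bigr)=2qq_x+2\psi q_{xx}-2qq_x=2\psi q_{xx}.
\]
With it the equation becomes $\partial_x\bigl(q_t+2\psi q_x-q^2+\sigma u^2-\nu q_{xx}\bigr)=0$. Hence
\[
 q_t+Vq_x-q^2+\sigma u^2-\nu q_{xx}=C(t).
\]

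The main (though mild) obstacle is fixing $C(t)=0$, which is where the whole-line normalization enters. I send $x\to+\infty$. By assumption $q$, $u$, $q_t$, $q_x$, $q_{xx}$ tend to zero. Also $\psi$ has a finite limit there, so $Vq_x\to0$ because $V$ is bounded and $q_x\to0$. Thus $C(t)=0$, and \eqref{eq:q-q} follows. It should be noted that this uses only boundedness of $\psi$ at infinity, not equal limits at both ends. This is why the normalization permits different limits, and it contrasts with the periodic case, where the zero mode of $q$ produces a nonzero constant.

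For the converse, I run the computation backwards. Given a smooth decaying solution of \eqref{eq:u-q}--\eqref{eq:Vq}, set $\psi=V/2$ and $\omega=-q_x$. Then $\psi_x=q$, so $-\psi_{xx}=-q_x=\omega$, and \eqref{eq:u-q} is the first line of \eqref{eq:intro-model}. Differentiating \eqref{eq:q-q} in $x$ and using the same identity gives $-\omega_t-2\psi\omega_x=\sigma(u^2)_x+\nu(-\omega_{xx})$ up to the overall sign. That sign cancels, since $\omega=-q_x$ and so $(-\omega)_t+2\psi(-\omega)_x=-\sigma(u^2)_x+\nu(-\omega)_{xx}$, which is exactly the $\omega$-equation.
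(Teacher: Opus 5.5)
Your proposal is correct and follows the paper's proof essentially step for step. You rewrite the vorticity equation via $\omega=-q_x$, use the identity $\partial_x(Vq_x-q^2)=Vq_{xx}$, and kill the integration constant by letting $x\to\infty$ with decay and boundedness of $V$; your explicit inclusion of $q_t\to0$ is slightly more careful than the paper's list. One small slip: in the converse, your intermediate display has the $\sigma$-term with the wrong sign relative to the other terms, and no overall sign flip fixes it. Your subsequent equation $(-\omega)_t+2\psi(-\omega)_x=-\sigma(u^2)_x+\nu(-\omega)_{xx}$ is the correct one and directly gives the $\omega$-equation.
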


\begin{proof}
Since $\omega=-\psi_{xx}=-q_x$, the vorticity equation in \eqref{eq:intro-model} becomes
\[
 -q_{xt}-Vq_{xx}=2\sigma uu_x-\nu q_{xxx}.
\]
On the other hand,
\[
 \partial_x(Vq_x-q^2)
 =V_xq_x+Vq_{xx}-2qq_x
 =Vq_{xx},
\]
because $V_x=2q$.  Hence
\[
 \partial_x\bigl(q_t+Vq_x-q^2+\sigma u^2-\nu q_{xx}\bigr)=0.
\]
The expression in parentheses is independent of $x$.  The imposed decay of $u,q,q_x,q_{xx}$ and the boundedness of $V$ force that spatial constant to vanish.  This gives \eqref{eq:q-q}.  The $u$ equation and \eqref{eq:Vq} are immediate.  Differentiating \eqref{eq:q-q} and reversing the preceding calculation proves the converse.
\end{proof}

The zero spatial constant is not automatic on a periodic domain.  This distinction is important because the original Hou--Li global regularity theorem is periodic.

\begin{proposition}\label{prop:periodic}
Let the model be posed on a periodic interval of length $L$, with periodic $\psi$.  Then $\int q=0$ and the integrated $q$ equation is
\begin{equation}\label{eq:periodic-q}
 q_t+Vq_x=q^2-\sigma u^2+\nu q_{xx}+C(t),
\end{equation}
where
\begin{equation}\label{eq:Ct}
 C(t)=\sigma\avg{u^2}-3\avg{q^2},
 \qquad
 \avg{h}:=\frac1L\int_0^L h(x)\,dx.
\end{equation}
\end{proposition}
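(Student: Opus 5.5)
Since $\psi$ is periodic, $q=\psi_x$ is the derivative of a periodic function, and so $\int_0^L q\,dx=\psi(L)-\psi(0)=0$. Moreover $V=2\psi$ is periodic, and $V_x=2q$. The algebra in the proof of \cref{prop:qsystem} never used decay. It shows directly that
\[
 \partial_x\bigl(q_t+Vq_x-q^2+\sigma u^2-\nu q_{xx}\bigr)=0,
\]
so the bracket equals a function $C(t)$ of time alone. This gives \eqref{eq:periodic-q}. Note that $\psi$ is only determined up to an additive constant, so $V$ is too. Shifting $V$ by a constant changes the transport term, but that is part of the choice of normalization. I will keep whatever periodic $\psi$ solves $-\psi_{xx}=\omega$, since the identification of $C$ below does not depend on this constant.

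To identify $C(t)$, I will average \eqref{eq:periodic-q} over the period. The term $\avg{q_t}=\frac{d}{dt}\avg{q}$ vanishes because $\int q=0$ for all $t$. Likewise $\avg{q_{xx}}=0$ by periodicity. For the transport term I integrate by parts:
\[
 \int_0^L Vq_x\,dx=-\int_0^L V_x q\,dx=-2\int_0^L q^2\,dx,
\]
with no boundary terms because $V$ and $q$ are periodic. The averaged equation therefore reads
\[
 -2\avg{q^2}=\avg{q^2}-\sigma\avg{u^2}+C(t),
\]
which gives $C(t)=\sigma\avg{u^2}-3\avg{q^2}$, as claimed in \eqref{eq:Ct}. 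Any additive constant in $V$ contributes $\text{const}\cdot\int q_x=0$, which confirms that $C$ is independent of the normalization.

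The only delicate point is the transport integral. One must use $V_x=2q$ together with periodicity of $V$, and this is legitimate precisely because $\int q=0$ makes $\psi$, and hence $V$, periodic. The remaining steps are bookkeeping.
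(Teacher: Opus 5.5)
Your proposal is correct and follows the paper's proof essentially step for step. The $x$-derivative identity from \cref{prop:qsystem} gives \eqref{eq:periodic-q} with an undetermined $C(t)$, and averaging over a period with $\int q=0$ and $\int_0^L Vq_x\,dx=-2\int_0^L q^2\,dx$ yields $C(t)=\sigma\avg{u^2}-3\avg{q^2}$; your check that an additive constant in $V$ leaves $C$ unchanged is correct and goes slightly beyond the paper.
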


\begin{proof}
The same differentiation argument as in \cref{prop:qsystem} gives \eqref{eq:periodic-q} with an undetermined $C(t)$.  Integrating over one period and using periodicity,
\[
 \int_0^L Vq_x\,dx=-\int_0^L V_xq\,dx=-2\int_0^Lq^2\,dx.
\]
Since $q=\psi_x$ has zero mean, integration of \eqref{eq:periodic-q} yields
\[
 -2\int q^2=\int q^2-\sigma\int u^2+LC(t),
\]
which is exactly \eqref{eq:Ct}.
\end{proof}

\begin{remark}\label{rem:domain}
The term $C(t)$ is load-bearing.  In the negative-response variables introduced below it appears as the same additive forcing in both Riccati channels.  In particular, the whole-line invariant cone theorem cannot be transferred to the periodic Hou--Li model by simply replacing $\R$ with $\T$.  All blowup statements in \cref{sec:viscous} are whole-line statements under the normalization of \cref{prop:qsystem}.
\end{remark}

\section{Exact inviscid response geometry}

The sign change is already visible at the algebraic level.

\begin{theorem}\label{thm:factor}
Let $(u,q,V)$ solve \eqref{eq:u-q}--\eqref{eq:Vq}.

If $\sigma=c^2>0$ and
\[
 Z=q+icu,
\]
then
\begin{equation}\label{eq:complex}
 Z_t+VZ_x=Z^2+\nu Z_{xx},
 \qquad V_x=2\operatorname{Re}Z.
\end{equation}
If $\sigma=-c^2<0$ and
\[
 a_+=q+cu,
 \qquad a_-=q-cu,
\]
then
\begin{equation}\label{eq:channels}
 (a_\pm)_t+V(a_\pm)_x=a_\pm^2+\nu(a_\pm)_{xx},
 \qquad V_x=a_++a_-.
\end{equation}
\end{theorem}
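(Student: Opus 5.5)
The plan is to take linear combinations of \eqref{eq:u-q} and \eqref{eq:q-q} from \cref{prop:qsystem}. The only point that needs care is that the reaction terms recombine into a perfect square. Everything else is linear: the transport operator $\partial_t+V\partial_x$ and the diffusion $\nu\partial_{xx}$ commute with taking real-coefficient (or complex-coefficient) linear combinations. So for any constant $\lambda$, the combination $q+\lambda u$ satisfies
\[
 (q+\lambda u)_t+V(q+\lambda u)_x-\nu(q+\lambda u)_{xx}=q^2-\sigma u^2+2\lambda qu .
\]
We then choose $\lambda$ so that the right-hand side equals $(q+\lambda u)^2=q^2+2\lambda qu+\lambda^2u^2$. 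This holds exactly when $\lambda^2=-\sigma$.

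In the case $\sigma=c^2>0$, we take $\lambda=ic$. This gives $\lambda^2=-c^2=-\sigma$, so $Z=q+icu$ satisfies $Z_t+VZ_x=Z^2+\nu Z_{xx}$. Here $V$ is real because $u$ and $q$ are real, so it is legitimate to apply the real equations with a complex coefficient. The constraint \eqref{eq:Vq} gives $V_x=2q=2\operatorname{Re}Z$. This yields \eqref{eq:complex}.

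In the case $\sigma=-c^2<0$, the two real choices $\lambda=\pm c$ both satisfy $\lambda^2=c^2=-\sigma$. So $a_\pm=q\pm cu$ each obey $(a_\pm)_t+V(a_\pm)_x=a_\pm^2+\nu(a_\pm)_{xx}$. The velocity relation follows from $a_++a_-=2q=V_x$, with the normalization $V(0,t)=0$ carried over unchanged. This yields \eqref{eq:channels}.

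I do not expect a genuine obstacle; the argument is purely algebraic. The one thing to check carefully is the cross term: the stretching term $2qu$ in \eqref{eq:u-q} must exactly supply the $2\lambda qu$ needed for the square. That works because the coefficient $2$ of $\psi_x u$ in the model matches the coefficient of the cross term in the square. I would write that line out explicitly in the proof.
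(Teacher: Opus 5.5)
Your proposal is correct and follows the paper's proof: you take linear combinations of \eqref{eq:u-q} and \eqref{eq:q-q} and check that the reaction terms form a perfect square. Your single condition $\lambda^2=-\sigma$ simply packages the paper's two case-by-case computations, $\lambda=ic$ and $\lambda=\pm c$, into one line.
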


\begin{proof}
For $\sigma=c^2$, combine \eqref{eq:u-q} and \eqref{eq:q-q}:
\begin{align*}
 (q+icu)_t+V(q+icu)_x
 &=q^2-c^2u^2+i(2cqu)+\nu(q+icu)_{xx}\\
 &=(q+icu)^2+\nu(q+icu)_{xx}.
\end{align*}
The identity $V_x=2q=2\operatorname{Re}Z$ follows from \eqref{eq:Vq}.  If $\sigma=-c^2$, then
\begin{align*}
 (q\pm cu)_t+V(q\pm cu)_x
 &=q^2+c^2u^2\pm2cqu+\nu(q\pm cu)_{xx}\\
 &=(q\pm cu)^2+\nu(q\pm cu)_{xx},
\end{align*}
and $V_x=2q=a_++a_-$.
\end{proof}

For $\nu=0$, both cases can be integrated exactly along characteristics.  We record the formulas because they make the same-data sign comparison transparent.

\begin{theorem}\label{thm:lagrangian}
Assume $\nu=0$ and let $X(\xi,t)$ solve
\[
 X_t(\xi,t)=V(X(\xi,t),t),
 \qquad X(\xi,0)=\xi.
\]
For the positive sign $\sigma=c^2$,
\begin{equation}\label{eq:lag-pos}
 Z(X(\xi,t),t)=\frac{Z_0(\xi)}{1-tZ_0(\xi)},
 \qquad
 X_\xi(\xi,t)=\frac{1}{|1-tZ_0(\xi)|^2}.
\end{equation}
For the negative sign $\sigma=-c^2$,
\begin{equation}\label{eq:lag-neg}
 a_\pm(X(\xi,t),t)=\frac{a_{\pm,0}(\xi)}{1-ta_{\pm,0}(\xi)},
\end{equation}
\begin{equation}\label{eq:jac-neg}
 X_\xi(\xi,t)
 =\frac{1}{(1-ta_{+,0}(\xi))(1-ta_{-,0}(\xi))}.
\end{equation}
The formulas remain classical as long as the displayed denominators stay nonzero and $X(\cdot,t)$ remains an increasing diffeomorphism.
\end{theorem}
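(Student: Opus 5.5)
The plan is to work along the characteristics $X(\xi,t)$ and use the factorized forms from \cref{thm:factor} with $\nu=0$. In both cases the quantities $Z$ and $a_\pm$ satisfy a pure transport equation with a quadratic source.

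\emph{Step 1: the Riccati ODE along characteristics.} Let $z(\xi,t)=Z(X(\xi,t),t)$. By the chain rule and \eqref{eq:complex},
\[
 \partial_t z=Z_t+VZ_x=z^2 .
\]
This is the scalar (complex) Riccati ODE. Its unique solution with $z(\xi,0)=Z_0(\xi)$ is $Z_0/(1-tZ_0)$, which one checks by differentiating directly. The same argument applied to \eqref{eq:channels} gives \eqref{eq:lag-neg} for each channel separately. This holds as long as the denominator stays nonzero, so uniqueness of the ODE solution applies.

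\emph{Step 2: the Jacobian.} Differentiating $X_t=V(X,t)$ in $\xi$ gives
\[
 \partial_t X_\xi=V_x(X,t)\,X_\xi,\qquad X_\xi(\xi,0)=1,
\]
so that
\[
 X_\xi=\exp\Bigl(\int_0^t V_x(X(\xi,s),s)\,ds\Bigr).
\]
\begin{itemize}
\item For $\sigma=-c^2$: $V_x=a_++a_-$, and along characteristics $a_\pm=a_{\pm,0}/(1-sa_{\pm,0})=-\partial_s\log(1-sa_{\pm,0})$. The integral is therefore $-\log\bigl((1-ta_{+,0})(1-ta_{-,0})\bigr)$, which gives \eqref{eq:jac-neg}.
\item For $\sigma=c^2$: $V_x=2\operatorname{Re}Z=Z+\bar Z$. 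The analogous computation gives $-\log\bigl((1-tZ_0)(1-t\bar Z_0)\bigr)=-\log|1-tZ_0|^2$, which yields \eqref{eq:lag-pos}.
\end{itemize}
In the negative case the logarithms are real and well defined while the denominators remain positive. On the time interval starting at $t=0$ they are positive by continuity, since they equal $1$ initially and are assumed nonvanishing. In the complex case, use $\partial_s\log|1-sZ_0|^2$, which avoids any choice of branch.

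\emph{Step 3: validity.} The derivation uses only that $V$ is $C^1$, so the characteristic flow exists, and that $X(\cdot,t)$ is an increasing diffeomorphism. The latter lets us pass between Lagrangian and Eulerian values, reconstructing $Z(\cdot,t)$ or $a_\pm(\cdot,t)$ by composing with $X^{-1}$. This accounts for the final sentence of the statement.

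The only mildly delicate point is logical consistency: $V$ itself depends on the solution through $V_x=2q$, $V(0,t)=0$. I would treat the formulas as identities satisfied by any classical solution on its interval of existence, not as a construction. In that form no fixed-point argument is needed; each step is a direct ODE computation along a given characteristic.
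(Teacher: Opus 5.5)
Your proposal is correct and follows essentially the same route as the paper. You reduce \eqref{eq:complex} and \eqref{eq:channels} with $\nu=0$ to the Riccati ODE $\dot z=z^2$ along characteristics, then integrate $\partial_t X_\xi=V_x(X,t)X_\xi$ using $a_\pm=-\partial_s\log(1-sa_{\pm,0})$ and $\partial_s\log|1-sZ_0|^2=-2\operatorname{Re}\bigl(Z_0/(1-sZ_0)\bigr)$, exactly as the paper does, and your remarks on positivity of the real denominators by continuity and on treating the formulas as identities for a given classical solution are sound.
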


\begin{proof}
Along a characteristic, \eqref{eq:complex} with $\nu=0$ gives $\dot Z=Z^2$, hence the first formula in \eqref{eq:lag-pos}.  The Jacobian $J=X_\xi$ obeys
\[
 \dot J=V_x(X,t)J=2\operatorname{Re}Z(X,t)J.
\]
Since
\[
 \frac{d}{dt}\log|1-tZ_0|^2
 =-2\operatorname{Re}\frac{Z_0}{1-tZ_0},
\]
and $J(\xi,0)=1$, integration gives the second formula in \eqref{eq:lag-pos}.

For the negative sign, \eqref{eq:channels} gives independently
\[
 \frac{d}{dt}a_\pm(X,t)=a_\pm(X,t)^2,
\]
which proves \eqref{eq:lag-neg}.  Moreover
\[
 \dot J=V_xJ=(a_++a_-)J.
\]
Substitution of \eqref{eq:lag-neg} and integration yield \eqref{eq:jac-neg}.
\end{proof}

\begin{corollary}\label{cor:samedata}
Let $f\in C_c^\infty(\R)$ be nonnegative and nonzero and let $c>0$.  Use the same initial data
\begin{equation}\label{eq:samedata}
 q_0=cf,
 \qquad u_0=f
\end{equation}
in the two inviscid models.

For $\sigma=c^2$, the corresponding solution is classical for every finite $t\ge0$.  For $\sigma=-c^2$, it blows up at
\begin{equation}\label{eq:samedataT}
 T_*=(2c\|f\|_\infty)^{-1}.
\end{equation}
\end{corollary}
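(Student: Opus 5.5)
We use the Lagrangian formulas of \cref{thm:lagrangian}. They turn the question into tracking denominators along characteristics, together with a continuation argument that keeps the flow map an increasing diffeomorphism.

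\emph{Positive sign.} Here $Z_0=q_0+icu_0=cf(1+i)$. Since $f\ge0$, we get $1-tZ_0=1-tcf-itcf$. Its modulus squared is
\[
(1-tcf)^2+(tcf)^2\ge\tfrac12,
\]
with the minimum attained at $tcf=1/2$. Hence the denominator never vanishes. By \eqref{eq:lag-pos}, $X_\xi=|1-tZ_0|^{-2}\in(0,2]$, and $|Z|=|Z_0|/|1-tZ_0|\le\sqrt2\,|Z_0|$ is uniformly bounded.

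The argument is a continuation argument on a maximal classical interval $[0,T)$. On this interval the a priori bounds on $Z$, and hence on $q$, $u$ and $V_x$, are uniform. Differentiating the characteristic ODE in $\xi$ also gives bounds on higher derivatives. These bounds follow from the explicit formulas, since $\partial_\xi^k Z_0$ is bounded and $X_\xi\ge|1+|Z_0|\,t|^{-2}>0$ on bounded time intervals.

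We need $X(\cdot,t)$ to be a diffeomorphism of $\R$. This holds because $X_\xi>0$ and $X(\xi,t)=\xi+\text{const}$ outside the support, where $V$ is constant. Inverting then yields smooth $(u,q)$ with bounded derivatives on $[0,T]$. The standard local theory, applied to the transport ODE system with nonlocal $V=\int_0^x2q$, continues the solution past any finite $T$. We therefore assert global classical existence; the local well-posedness itself we take as standard.

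\emph{Negative sign.} Now $a_{+,0}=2cf$ and $a_{-,0}=0$. By \eqref{eq:lag-neg}, $a_-\equiv0$ and $a_+(X,t)=2cf/(1-2ctf)$. From \eqref{eq:jac-neg}, $X_\xi=(1-2ctf(\xi))^{-1}>0$ for $t<T_*$. On $[0,T_*)$ the solution is classical by the same continuation argument, since every denominator is bounded below by $1-t/T_*>0$.

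At a point $\xi_0$ where $f(\xi_0)=\|f\|_\infty$, we have
\[
a_+(X(\xi_0,t),t)=\frac{2c\|f\|_\infty}{1-t/T_*}\to\infty .
\]
So $\|q\|_\infty$ and $\|u\|_\infty$ blow up as $t\uparrow T_*$, recalling $q=(a_++a_-)/2$ and $u=(a_+-a_-)/(2c)$. No earlier breakdown is possible, since the solution is classical before $T_*$. This gives exactly $T_*=(2c\|f\|_\infty)^{-1}$.

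\emph{Main obstacle.} The main obstacle is rigor in the continuation step. We must show that the explicit Lagrangian formulas genuinely describe the unique classical solution. This requires checking three things: the flow map stays an increasing bijection of $\R$; the normalization $V(0,t)=0$ is compatible, so that $V=\int_0^x(a_++a_-)$ remains bounded; and the decay assumptions of \cref{prop:qsystem} are preserved. The decay holds because $u$ and $q$ vanish outside the transported support, which moves with bounded speed.

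I would handle this by building the solution directly from the formulas. Define $X$ via the ODE
\[
X_t=\int_0^{X}2q\,dx,
\]
rewritten in Lagrangian form as
\[
X_t(\xi,t)=X_t(\xi_*,t)+\int_{\xi_*}^{\xi}2\operatorname{Re}Z_0/|1-tZ_0|^2\,d\eta ,
\]
where $\xi_*=X^{-1}(0)$ is tracked implicitly. Then verify the equations, and invoke uniqueness of smooth decaying solutions.
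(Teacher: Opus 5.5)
Your proposal is correct and follows the paper's proof: the bound $|1-tZ_0|^2=(1-s)^2+s^2\ge\frac12$ with $s=ctf$ for the standard sign, $X_\xi=1$ off the support so the flow map stays an increasing bijection of $\R$, and, for the reversed sign, $a_{-,0}=0$ with the single denominator $1-2ctf$ first vanishing at $t=(2c\|f\|_\infty)^{-1}$. The only slip is in your optional construction at the end, and it does not affect the main argument: the Lagrangian integrand should be $2\operatorname{Re}\bigl(Z_0/(1-tZ_0)\bigr)\,|1-tZ_0|^{-2}=\partial_t|1-tZ_0|^{-2}$, not $2\operatorname{Re}Z_0/|1-tZ_0|^2$; moreover, since $V(0,t)=0$ the origin is fixed by the flow, so $\xi_*\equiv0$ and $X(\xi,t)=\int_0^\xi|1-tZ_0(\eta)|^{-2}\,d\eta$ is fully explicit.
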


\begin{proof}
For $\sigma=c^2$,
\[
 Z_0=cf(1+i).
\]
Write $s=ctf(\xi)\ge0$.  Then
\[
 |1-tZ_0|^2=|1-s(1+i)|^2=(1-s)^2+s^2
 =2\left(s-\frac12\right)^2+\frac12\ge\frac12.
\]
Thus neither $Z$ nor the Jacobian in \eqref{eq:lag-pos} can develop a finite-time pole.  Outside the compact support of $f$ one has $Z_0=0$ and therefore $X_\xi=1$.  Hence $X(\cdot,t)$ remains an increasing map onto $\R$ for every finite $t$.  The formulas in \cref{thm:lagrangian} consequently give a global classical solution.

For $\sigma=-c^2$,
\[
 a_{+,0}=2cf,
 \qquad a_{-,0}=0.
\]
The first denominator in \eqref{eq:lag-neg} vanishes precisely when $t=(2c\max f)^{-1}$, while the second is identically one.  This proves \eqref{eq:samedataT}.
\end{proof}

\begin{remark}
The inviscid sign-parametric family belongs, after standard changes of variables and normalization, to the generalized two-component Hunter--Saxton hierarchy.  The literature contains much broader periodic and whole-line blowup/global classifications \cite{Wunsch2010,WuWunsch2012,MoonLiu2012,Sarria2015}.  The role of \cref{cor:samedata} is therefore not a priority claim about inviscid classification.  It is a controlled sign experiment: the domain, velocity law, and initial profile are kept fixed and only the vorticity-response orientation is changed.
\end{remark}

\section{Viscous blowup on an infinite-dimensional cone}\label{sec:viscous}

We now fix $\sigma=-c^2<0$ and work with the real channels \eqref{eq:channels}.  It is convenient to regard them as the primary unknowns:
\begin{equation}\label{eq:twochannel}
\begin{aligned}
 (a_+)_t+V(a_+)_x&=a_+^2+\nu(a_+)_{xx},\\
 (a_-)_t+V(a_-)_x&=a_-^2+\nu(a_-)_{xx},\\
 V_x&=a_++a_-,\qquad V(0,t)=0.
\end{aligned}
\end{equation}
The original variables are recovered by
\begin{equation}\label{eq:recoverqu}
 q=\frac{a_++a_-}{2},
 \qquad
 u=\frac{a_+-a_-}{2c},
 \qquad
 \psi=\frac V2,
 \qquad
 \omega=-q_x.
\end{equation}

For smooth rapidly decaying data, standard one-dimensional parabolic theory gives a unique maximal classical solution when $\nu>0$.  A continuation argument adapted to the cone is included in \cref{app:continuation}; in particular, a finite maximal time in the cone can occur only if $\|a_+\|_\infty+\|a_-\|_\infty$ becomes unbounded.  When $\nu=0$, local existence follows from the characteristic equations in \cref{thm:lagrangian}.

\begin{lemma}\label{lem:cone}
Assume that $a_{+,0},a_{-,0}\in C_c^\infty(\R)$ are even, nonnegative, and nonincreasing on $(0,\infty)$.  Then, on the maximal classical lifespan, each $a_\pm$ remains even and nonnegative and satisfies
\begin{equation}\label{eq:monotone}
 (a_\pm)_x(x,t)\le0\qquad(x>0).
\end{equation}
Consequently
\begin{equation}\label{eq:Vsign}
 V(x,t)\ge0\qquad(x>0)
\end{equation}
and
\begin{equation}\label{eq:supersolution}
 (a_\pm)_t-\nu(a_\pm)_{xx}
 =a_\pm^2-V(a_\pm)_x
 \ge a_\pm^2.
\end{equation}
\end{lemma}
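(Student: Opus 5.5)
Evenness follows from uniqueness under the reflection $x\mapsto -x$. If $(a_+,a_-,V)$ solves \eqref{eq:twochannel}, then so does $(\tilde a_\pm,\tilde V)(x,t)=(a_\pm(-x,t),-V(-x,t))$. Indeed, $\tilde V_x=a_++a_-$ evaluated at $-x$, $\tilde V(0,t)=0$, and the transport term $\tilde V(\tilde a_\pm)_x=V(-x)(a_\pm)_x(-x)$ is invariant. The data are even, so uniqueness of the maximal classical solution gives $a_\pm$ even, and then $V$ is odd.

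For nonnegativity, each channel solves a linear parabolic equation
\[
(a_\pm)_t+V(a_\pm)_x-\nu(a_\pm)_{xx}-a_\pm\cdot a_\pm=0,
\]
with bounded coefficients $V$ and $a_\pm$ on compact subintervals of the lifespan. The maximum principle on $\R$, applied to decaying solutions, then preserves $a_\pm\ge0$.

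The main step is the monotonicity \eqref{eq:monotone}. Set $w_\pm=(a_\pm)_x$ and differentiate the channel equation:
\[
(w_\pm)_t+Vw_{\pm,x}-\nu w_{\pm,xx}=2a_\pm w_\pm-(a_++a_-)w_\pm=(a_\pm-a_\mp)w_\pm .
\]
This is a linear equation in $w_\pm$ alone, with bounded zeroth-order coefficient, because $V_x=a_++a_-$. Evenness gives $w_\pm(0,t)=0$, and $w_\pm\to0$ as $x\to\infty$. I will use the maximum principle on the half-line $(0,\infty)$ with Dirichlet condition at $0$. After the substitution $\tilde w=e^{-Kt}w$, with $K$ exceeding $\sup|a_+-a_-|$ on $[0,T']$, a positive interior maximum of $\tilde w$ is impossible, and $w_\pm(\cdot,0)\le0$ then propagates to $w_\pm\le0$ for all $t\le T'<T_{\max}$.

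I expect the obstacle to be rigor at $x=\infty$ and regularity up to $t=0$. The data are only compactly supported, so some care is needed, and I would handle it by a standard penalization. Take $w-\varepsilon(1+x^2)^{1/2}e^{Mt}$, or simply use decay of $w$ to ensure the supremum is attained. When $\nu=0$ I would instead argue along characteristics: $w$ satisfies the ODE $\dot w=(a_\pm-a_\mp)w$ along $X$, so its sign is preserved.

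Once \eqref{eq:monotone} holds, the sign of $V$ follows from
\[
V(x,t)=\int_0^x(a_++a_-)\,dy\ge0 \qquad (x>0).
\]
Hence $-V(a_\pm)_x\ge0$ for $x>0$. For $x<0$, both $V$ and $(a_\pm)_x$ flip sign, so the product $V(a_\pm)_x\le0$ on all of $\R$. This gives \eqref{eq:supersolution} everywhere.
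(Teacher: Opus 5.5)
Your proposal is correct and follows essentially the same route as the paper: parity from uniqueness under the reflection $(a_\pm,V)\mapsto(a_\pm(-x,t),-V(-x,t))$, nonnegativity by the scalar maximum principle, monotonicity from the maximum principle applied to the linear differentiated equation \eqref{eq:hpm} on the half-line with $h_\pm(0,t)=0$ and decay at infinity, and then $V\ge0$ and \eqref{eq:supersolution} by integration and parity. Your exponential substitution $e^{-Kt}w$ and your characteristic argument for $\nu=0$ spell out details that the paper leaves implicit.
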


\begin{proof}
The system is invariant under $x\mapsto-x$ if $a_\pm$ are even and $V$ is odd.  Since $V$ is recovered uniquely from $V_x=a_++a_-$ and $V(0,t)=0$, uniqueness preserves this parity.

Nonnegativity follows from the scalar maximum principle applied separately to each channel.  Indeed, if a channel were to cross from nonnegative to negative values, at the first spacetime point where its minimum reached zero one would have $(a_\pm)_x=0$, $(a_\pm)_{xx}\ge0$, and $a_\pm^2=0$, contradicting a negative time derivative there.  The same argument along characteristics applies when $\nu=0$.

Set $h_\pm=(a_\pm)_x$ on $x>0$.  Differentiating \eqref{eq:twochannel} and using $V_x=a_++a_-$ gives
\begin{equation}\label{eq:hpm}
 (h_\pm)_t+V(h_\pm)_x
 =\nu(h_\pm)_{xx}+(a_\pm-a_\mp)h_\pm.
\end{equation}
Evenness gives $h_\pm(0,t)=0$, and decay gives $h_\pm(x,t)\to0$ as $x\to\infty$.  The coefficient $a_\pm-a_\mp$ is bounded on every compact subinterval of the classical lifespan.  The maximum principle for the linear equation \eqref{eq:hpm} therefore preserves $h_\pm\le0$ on the half-line.  Since $V(0,t)=0$ and $V_x=a_++a_-\ge0$, \eqref{eq:Vsign} follows.  Combining \eqref{eq:Vsign} and \eqref{eq:monotone}, and using parity on $x<0$, yields \eqref{eq:supersolution}.
\end{proof}

The comparison equation is the classical Fujita problem
\begin{equation}\label{eq:fujita}
 g_t=\nu g_{xx}+g^2,
 \qquad g(0)=g_0\ge0.
\end{equation}
We include the exact criterion needed below.

\begin{lemma}\label{lem:fujita}
Let $\nu>0$ and let $g_0\in L^1(\R)\cap L^\infty(\R)$ be nonnegative and nonzero.  Set
\[
 K_T(x)=\frac{1}{\sqrt{4\pi\nu T}}e^{-x^2/(4\nu T)}.
\]
If
\begin{equation}\label{eq:fujita-criterion}
 T\int_\R g_0(x)K_T(x)\,dx>1,
\end{equation}
then the classical solution of \eqref{eq:fujita} cannot exist up to time $T$.  Such a $T$ exists for every nonzero $g_0$.
\end{lemma}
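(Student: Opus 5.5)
We use the classical backward-heat-kernel (Kaplan–Fujita) test function with a moving time horizon. Suppose the classical solution $g$ of \eqref{eq:fujita} exists on $[0,T]$; it is nonnegative by the maximum principle. For $0\le t<T$ set
\[
 \phi(t)=\int_\R g(x,t)\,K_{T-t}(x)\,dx .
\]
The kernel $K_{T-t}$ solves the backward heat equation $\partial_t K_{T-t}=-\nu\,\partial_{xx}K_{T-t}$. Differentiating and integrating by parts twice (justified by Gaussian decay of the kernel and boundedness of $g$ on compact time intervals), the diffusion terms cancel and we get
\[
 \phi'(t)=\int_\R g^2\,K_{T-t}\,dx .
\]
Since $K_{T-t}\ge0$ and $\int K_{T-t}=1$, Jensen's inequality gives
\[
 \phi'(t)\ge\phi(t)^2 .
\]

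Next we integrate this Riccati inequality. At $t=0$ we have $\phi(0)=\int g_0K_T>0$. Comparison with $\dot y=y^2$ gives
\[
 \phi(t)\ge\frac{\phi(0)}{1-t\phi(0)},
\]
so $\phi$ becomes infinite no later than $t=\phi(0)^{-1}$. Hypothesis \eqref{eq:fujita-criterion} says exactly that $\phi(0)^{-1}<T$. On the other hand $\phi(t)\le\|g(\cdot,t)\|_\infty$, which stays bounded on $[0,T]$ for a classical solution. This is a contradiction, so the solution cannot exist up to time $T$.

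It remains to show that a suitable $T$ exists for every nonzero $g_0$. Write
\[
 T\int g_0K_T=\frac{\sqrt{T}}{\sqrt{4\pi\nu}}\int_\R g_0(x)\,e^{-x^2/(4\nu T)}\,dx .
\]
As $T\to\infty$ the integral increases monotonically to $\|g_0\|_{L^1}>0$ by monotone convergence. The prefactor $\sqrt T$ tends to infinity, so the whole expression tends to $+\infty$ and eventually exceeds $1$. This step uses only $g_0\in L^1$ and $g_0\ge0$, $g_0\neq0$, and reflects the Fujita-critical fact that in one dimension the exponent $2$ lies below $1+2/n=3$.

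The main technical obstacle is justifying the differentiation of $\phi$ and the integrations by parts near $t=T$, where $K_{T-t}$ degenerates to a delta function. We avoid it by working only on $[0,t_1]$ with $t_1<T$ fixed. There $K_{T-t}$ is a smooth Gaussian with uniformly controlled tails, and $g,g_x$ are bounded (standard parabolic regularity for $L^\infty$ data, with smoothing for $t>0$; near $t=0$ we use the mild or integrated form instead). The Riccati comparison on $[0,t_1]$ already forces $\phi$ to blow up at some $t_1<T$, contradicting boundedness of $g$ there. So the argument never needs to reach $t=T$ itself.
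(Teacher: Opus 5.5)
Your proposal is correct and follows the paper's proof essentially step for step: the backward heat kernel $K_{T-t}$, the identity $\phi'=\int g^2K_{T-t}$, Jensen to get $\phi'\ge\phi^2$, Riccati integration against $T\phi(0)>1$, and $\sqrt{T}$ growth of $T\int g_0K_T$. The differences are minor: you use monotone rather than dominated convergence, and you add justification the paper leaves implicit (working on $[0,t_1]$ with $t_1<T$, and using the mild form near $t=0$).
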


\begin{proof}
Fix $T$ and define the backward heat kernel
\[
 K(x,t)=\frac{1}{\sqrt{4\pi\nu(T-t)}}
 e^{-x^2/(4\nu(T-t))},
 \qquad 0\le t<T.
\]
Then $K_t+\nu K_{xx}=0$ and $\int_\R K(x,t)\,dx=1$.  For
\[
 Y(t)=\int_\R g(x,t)K(x,t)\,dx,
\]
integration by parts gives
\[
 Y'(t)=\int_\R g(x,t)^2K(x,t)\,dx.
\]
Because $K\,dx$ is a probability measure, Jensen's inequality yields
\[
 Y'(t)\ge Y(t)^2.
\]
Thus
\[
 \frac1{Y(t)}\le\frac1{Y(0)}-t
\]
as long as the solution remains classical.  If $TY(0)>1$, the right-hand side vanishes before $T$, proving the first assertion.

To prove existence of an admissible $T$, write $M=\int g_0>0$.  Dominated convergence gives
\[
 \sqrt{T}\int g_0(x)K_T(x)\,dx
 \longrightarrow\frac{M}{\sqrt{4\pi\nu}}
 \qquad(T\to\infty).
\]
Hence $T\int g_0K_T$ grows like a positive constant times $\sqrt{T}$ and eventually exceeds one.
\end{proof}

\begin{corollary}\label{cor:explicit-upper}
Suppose $g_0\ge0$ and
\[
 M_R:=\int_{-R}^R g_0(x)\,dx>0.
\]
Then every
\begin{equation}\label{eq:explicit-upper}
 T>\max\left\{\frac{R^2}{4\nu},\frac{4\pi e^2\nu}{M_R^2}\right\}
\end{equation}
satisfies \eqref{eq:fujita-criterion}.
\end{corollary}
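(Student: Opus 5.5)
We need to show that $T\int g_0K_T>1$ under \eqref{eq:explicit-upper}. The plan is to keep only the contribution of $[-R,R]$, bound the Gaussian from below there, and then compare the resulting power of $T$ with the second threshold.

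Since $g_0\ge0$ and $K_T>0$, we first discard the complement of $[-R,R]$:
\[
 T\int_\R g_0K_T\,dx\ \ge\ T\int_{-R}^R g_0(x)K_T(x)\,dx .
\]
On $|x|\le R$ we have $x^2/(4\nu T)\le R^2/(4\nu T)$. The first condition $T>R^2/(4\nu)$ makes this quantity less than $1$, so $e^{-x^2/(4\nu T)}>e^{-1}$ there. This gives
\[
 T\int_\R g_0K_T\,dx\ >\ \frac{T\,M_R}{e\sqrt{4\pi\nu T}}=\frac{M_R\sqrt T}{e\sqrt{4\pi\nu}} .
\]
The inequality is strict because $M_R>0$ forces $g_0$ to be positive on a set of positive measure in $[-R,R]$.

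It remains to make the right-hand side at least $1$. That happens exactly when $\sqrt T\ge e\sqrt{4\pi\nu}/M_R$, i.e. when $T\ge 4\pi e^2\nu/M_R^2$. The second condition in \eqref{eq:explicit-upper} gives this with room to spare. Combining the strict inequality above with this bound yields $T\int g_0K_T>1$, which is \eqref{eq:fujita-criterion}.

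No step is a genuine obstacle. The only point to watch is the matching of constants: taking the exponent cutoff at $1$ produces the factor $e^{-1}$, and squaring it produces the $e^2$ in the threshold. A cruder cutoff would give a different, though still valid, constant.
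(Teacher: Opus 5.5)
Your proof is correct and is the paper's argument: restrict to $[-R,R]$, use $T\ge R^2/(4\nu)$ to bound the Gaussian below by $e^{-1}$, obtain $T\int g_0K_T\ge e^{-1}M_R\sqrt{T}/\sqrt{4\pi\nu}$, and conclude from the second threshold. Your positive-measure argument for strictness in the first step is harmless but unnecessary, since the second inequality in \eqref{eq:explicit-upper} is already strict.
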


\begin{proof}
If $T\ge R^2/(4\nu)$, then $e^{-x^2/(4\nu T)}\ge e^{-1}$ on $[-R,R]$.  Therefore
\[
 T\int g_0K_T
 \ge \frac{e^{-1}M_R\sqrt{T}}{\sqrt{4\pi\nu}},
\]
which is greater than one under the second inequality in \eqref{eq:explicit-upper}.
\end{proof}

We can now state the main theorem.

\begin{theorem}\label{thm:main}
Let $c>0$, $\sigma=-c^2$, and $\nu\ge0$.  Assume
\[
 a_{+,0},a_{-,0}\in C_c^\infty(\R)
\]
are even, nonnegative, nonincreasing on $(0,\infty)$, and not both identically zero.  Define $(q_0,u_0)$ by \eqref{eq:recoverqu}.  Then the maximal classical whole-line solution of \eqref{eq:intro-model} blows up in finite time.  More precisely,
\begin{equation}\label{eq:amplitude-blowup}
 \lim_{t\uparrow T_*}
 \bigl(\|a_+(t)\|_\infty+\|a_-(t)\|_\infty\bigr)=\infty.
\end{equation}

If $\nu=0$, then
\begin{equation}\label{eq:exactT}
 T_*=\frac1{A_0},
 \qquad
 A_0:=\max\{\|a_{+,0}\|_\infty,\|a_{-,0}\|_\infty\}.
\end{equation}
If $\nu>0$, then
\begin{equation}\label{eq:lowerlife}
 T_*\ge\frac1{A_0},
\end{equation}
and, for either nonzero channel $a_{j,0}$, every $T$ satisfying
\begin{equation}\label{eq:upperlife}
 T\int_\R a_{j,0}(x)K_T(x)\,dx>1
\end{equation}
satisfies $T_*<T$.
\end{theorem}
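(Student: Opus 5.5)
The plan is to work throughout with the two-channel system \eqref{eq:twochannel}. The data are compactly supported, so \cref{thm:factor} and \cref{prop:qsystem} identify the maximal classical solution of \eqref{eq:intro-model} with that of \eqref{eq:twochannel}, with the original variables recovered through \eqref{eq:recoverqu}. The amplitude statement \eqref{eq:amplitude-blowup} will come from the continuation criterion of \cref{app:continuation}: if $T_*<\infty$, then $\|a_+\|_\infty+\|a_-\|_\infty$ cannot stay bounded. To upgrade a limsup to a genuine limit, I would use a comparison bound on $\|a_\pm(t)\|_\infty$ near $T_*$ (see the next paragraph), which forces the norm to remain large. It therefore suffices to prove that $T_*<\infty$, together with the stated bounds on the lifespan.

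\emph{Inviscid case.} Here I use \cref{thm:lagrangian}. By \cref{lem:cone}, both channels stay nonnegative, so every denominator $1-ta_{\pm,0}(\xi)$ is positive for $t<1/A_0$. It follows that $X_\xi>0$. Since $X_\xi=1$ off the support, $X(\cdot,t)$ is an increasing diffeomorphism onto $\R$, and the solution remains classical on $[0,1/A_0)$. At a point $\xi_0$ where $a_{j,0}(\xi_0)=A_0$ (namely $\xi_0=0$, by monotonicity), \eqref{eq:lag-neg} gives $a_j(X(\xi_0,t),t)=A_0/(1-tA_0)\to\infty$. This yields $T_*=1/A_0$ exactly.

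\emph{Viscous case, lower bound \eqref{eq:lowerlife}.} I would compare with the ODE $y'=y^2$, $y(0)=A_0$. Each channel satisfies $(a_\pm)_t+V(a_\pm)_x=a_\pm^2+\nu(a_\pm)_{xx}$. At a spatial maximum of $a_\pm$ the transport and diffusion terms have the favourable sign, so a maximum-principle argument gives $\|a_\pm(t)\|_\infty\le A_0/(1-tA_0)$. Applying the continuation criterion then gives $T_*\ge 1/A_0$. The same reasoning, run from a time $s$ close to $T_*$, gives $\|a(t)\|_\infty\ge 1/(T_*-t)$. This is the bound needed to turn the continuation criterion into the full limit in \eqref{eq:amplitude-blowup}.

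\emph{Viscous case, upper bound \eqref{eq:upperlife}.} By \eqref{eq:supersolution} in \cref{lem:cone}, the nonzero channel $a_j$ is a supersolution of the Fujita equation \eqref{eq:fujita}. Rather than invoking a comparison principle, I would repeat the argument of \cref{lem:fujita} directly on $a_j$. Set $Y(t)=\int a_j(x,t)K(x,t)\,dx$. Integrating by parts against the backward heat kernel, and using \eqref{eq:supersolution}, gives $Y'\ge\int a_j^2K\ge Y^2$ by Jensen. Hence $1/Y(t)\le 1/Y(0)-t$ for as long as the solution is classical and $t<T$. If $TY(0)>1$, the solution cannot remain classical up to $T$, so $T_*<T$. \cref{lem:fujita} guarantees that such a $T$ exists, and \cref{cor:explicit-upper} makes it explicit; hence $T_*<\infty$.

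The main obstacle is justifying the integration by parts on $[0,t]$ for $t<\min(T,T_*)$. The classical solution is not compactly supported for $\nu>0$, so one needs decay or boundedness of $a_j$ and $(a_j)_x$ that is uniform on compact time intervals. I would take this from the rapid-decay class provided by the local theory of \cref{app:continuation}. Because $K$ is Gaussian, bounded $a_j$ together with integrable-in-space derivative terms is enough to kill the boundary terms.
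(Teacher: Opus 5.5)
Your proposal is correct. It follows the paper's overall architecture: the cone lemma and the supersolution inequality \eqref{eq:supersolution}, a backward-heat-kernel blowup argument, the Riccati comparison at the centre combined with \cref{prop:continue}, and exact characteristics for $\nu=0$. It differs from the written proof in two local places.

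\emph{Upper bound.} The paper solves the Fujita problem with $g_0=a_{j,0}$, uses the whole-line parabolic comparison principle to get $a_j\ge g$, and then applies \cref{lem:fujita} to $g$. You instead run the functional $Y(t)=\int a_jK$ directly on the supersolution $a_j$. This is legitimate because $K>0$ lets the pointwise inequality \eqref{eq:supersolution} pass through the integral. Your route dispenses with the auxiliary Fujita solution, its own continuation criterion, and the comparison theorem. It also gives directly $T_*\le\bigl(\int a_{j,0}K_T\bigr)^{-1}<T$. The only analytic input is the integration by parts against the Gaussian, which needs the same justification as \cref{lem:fujita} already does. Bounded $a_j$ and $(a_j)_x$ on compact time intervals, together with the at most linear growth $|V(x,t)|\le 2\sup(\|a_+\|_\infty+\|a_-\|_\infty)\,|x|$, suffice.

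\emph{The limit in \eqref{eq:amplitude-blowup}.} The paper asserts that the continuation criterion yields this limit. By itself, \cref{prop:continue} only gives $\sup_{[0,T_*)}(\|a_+\|_\infty+\|a_-\|_\infty)=\infty$, which is a $\limsup$ statement. Your restarted Riccati bound supplies what is missing. Restarting the comparison at time $s$ forces $s+1/\max_j\|a_j(s)\|_\infty\le T_*$, hence $\max_j\|a_j(s)\|_\infty\ge (T_*-s)^{-1}$, which upgrades the $\limsup$ to a genuine limit. On this point your argument is more complete than the paper's proof.
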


\begin{proof}
By \cref{thm:factor}, the negative-response model is exactly the two-channel system \eqref{eq:twochannel}.  By \cref{lem:cone}, the cone assumptions persist throughout the classical lifespan.

Assume first $\nu>0$.  Choose a channel $a_j$ whose initial datum is not identically zero, and let $g$ solve the Fujita problem \eqref{eq:fujita} with $g_0=a_{j,0}$.  Equation \eqref{eq:supersolution} and the parabolic comparison principle give
\[
 a_j(x,t)\ge g(x,t)
\]
as long as both solutions are classical.  By \cref{lem:fujita}, $g$ blows up before every $T$ satisfying \eqref{eq:upperlife}.  Therefore the two-channel solution cannot remain classical up to such a $T$, and in particular $T_*<\infty$.

We next prove the lower bound and identify the quantity that must diverge.  Since each channel is even and nonincreasing on $x>0$,
\[
 A_j(t):=a_j(0,t)=\|a_j(t)\|_\infty.
\]
At $x=0$ one has $V=0$, $(a_j)_x=0$, and $(a_j)_{xx}\le0$.  Hence
\[
 D^+A_j(t)\le A_j(t)^2.
\]
Comparison with the scalar Riccati equation gives
\begin{equation}\label{eq:center-upper}
 A_j(t)\le\frac{A_j(0)}{1-tA_j(0)}
 \qquad\text{for }t<A_j(0)^{-1},
\end{equation}
with the convention that the zero channel remains zero.  Thus both amplitudes stay bounded on every compact subinterval of $[0,A_0^{-1})$.  The continuation criterion proved in \cref{prop:continue} implies \eqref{eq:lowerlife}.  The same criterion shows that a finite $T_*$ can occur only through \eqref{eq:amplitude-blowup}.

Now suppose $\nu=0$.  The exact formulas \eqref{eq:lag-neg}--\eqref{eq:jac-neg} apply.  Since both initial channels are nonnegative, all denominators remain positive for
\[
 0\le t<A_0^{-1},
\]
and the Jacobian is strictly positive.  Outside the compact support of the data, both initial channels vanish and $X_\xi=1$, so the characteristic map remains onto $\R$.  At $t=A_0^{-1}$ at least one denominator vanishes at a point where the corresponding channel attains its maximum.  This proves \eqref{eq:exactT} and \eqref{eq:amplitude-blowup} in the inviscid case.
\end{proof}

The cone is visibly infinite-dimensional in the original variables: it consists of smooth data satisfying
\[
 q_0\pm cu_0\ge0,
\]
with both combinations even and nonincreasing away from the origin.  In particular, $q_0=cu_0$ is only one boundary face of the cone.

\begin{proposition}\label{prop:Lp}
Let a smooth solution of \eqref{eq:twochannel} remain in the cone of \cref{lem:cone}.  For every $p\ge1$ and $j\in\{+,-\}$, with $k$ denoting the other sign,
\begin{align}
 \frac{d}{dt}\int_\R a_j^p\,dx
 &+\nu p(p-1)\int_\R a_j^{p-2}(a_j)_x^2\,dx\nonumber\\
 &=(p+1)\int_\R a_j^{p+1}\,dx
 +\int_\R a_k a_j^p\,dx.\label{eq:Lp}
\end{align}
In particular,
\begin{equation}\label{eq:mass-id}
 \frac{d}{dt}\int a_j
 =2\int a_j^2+\int a_ja_k.
\end{equation}
\end{proposition}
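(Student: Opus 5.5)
The plan is a direct energy computation on \eqref{eq:twochannel}: multiply the $a_j$ equation by $p a_j^{p-1}$ and integrate over $\R$. Since the solution stays in the cone of \cref{lem:cone}, $a_j\ge0$. Hence $a_j^{p-1}$ is well defined and $p a_j^{p-1}(a_j)_t=\partial_t(a_j^p)$.

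Write the equation as $(a_j)_t=a_j^2-V(a_j)_x+\nu(a_j)_{xx}$ and treat the three terms separately.

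The reaction term gives $p\int a_j^{p+1}$.

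For the transport term, note that $pa_j^{p-1}V(a_j)_x=V\partial_x(a_j^p)$. Integrating by parts and using $V_x=a_++a_-=a_j+a_k$ gives
\[
 -\int V\partial_x(a_j^p)\,dx=\int(a_j+a_k)a_j^p\,dx .
\]
The boundary terms vanish: $V$ is bounded, since it is the integral of compactly supported or rapidly decaying functions, and $a_j^p\to0$ as $|x|\to\infty$.

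The diffusion term gives
\[
 \nu p\int a_j^{p-1}(a_j)_{xx}\,dx=-\nu p(p-1)\int a_j^{p-2}(a_j)_x^2\,dx .
\]

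Summing the three contributions, the reaction term $p\int a_j^{p+1}$ and the transport piece $\int a_j^{p+1}$ combine to $(p+1)\int a_j^{p+1}$. The remaining transport piece is $\int a_ka_j^p$, which is exactly \eqref{eq:Lp}. Setting $p=1$ kills the dissipation term and yields \eqref{eq:mass-id}.

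The one delicate point is justifying the integration by parts for $1<p<2$, where $a_j^{p-2}$ may be singular on the zero set of $a_j$. I would handle it by regularizing: replace $a_j^p$ by $(a_j+\epsilon)^p-\epsilon^p$, carry out the same computation, and let $\epsilon\to0$. The dissipation integrand is nonnegative, so monotone convergence justifies the limit. Alternatively, one can read the dissipation term as $\frac{4(p-1)}{p}\int|\partial_x a_j^{p/2}|^2$. For $p\ge2$, or whenever $a_j>0$, the computation is classical. Decay at infinity, which is needed for the boundary terms, follows from the rapid-decay framework of the classical solution.
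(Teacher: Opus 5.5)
Your proof is correct and matches the paper's: multiply the $a_j$ equation by $pa_j^{p-1}$, integrate the transport term by parts using $V_x=a_j+a_k$ to get $(p+1)\int a_j^{p+1}+\int a_ka_j^p$, and integrate the diffusion term by parts. Your $\epsilon$-regularization for $1<p<2$ is a sound extra justification that the paper omits.
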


\begin{proof}
Multiply the $j$th equation in \eqref{eq:twochannel} by $pa_j^{p-1}$ and integrate.  The transport term is
\[
 p\int Va_j^{p-1}(a_j)_x
 =\int V(a_j^p)_x
 =-\int V_xa_j^p
 =-\int a_j^{p+1}-\int a_ka_j^p.
\]
The diffusion term is
\[
 p\nu\int a_j^{p-1}(a_j)_{xx}
 =-\nu p(p-1)\int a_j^{p-2}(a_j)_x^2,
\]
and the reaction contributes $p\int a_j^{p+1}$.  Rearrangement gives \eqref{eq:Lp}; setting $p=1$ gives \eqref{eq:mass-id}.
\end{proof}

\begin{remark}
For the semilinear heat equation alone, the coefficient of $\int a_j^{p+1}$ in the corresponding identity is $p$.  In \eqref{eq:Lp}, the retained nonlocal transport raises it to $p+1$ and adds the nonnegative cross term $\int a_ka_j^p$.  This is an exact integral form of the pointwise inequality \eqref{eq:supersolution}.
\end{remark}

\section{A three-parameter 5D response family}

We now isolate the higher-dimensional structure suggested by the response-sign calculation.  In axisymmetric cylindrical variables define
\[
 F=\frac{u^\theta}{r},
 \qquad
 G=\frac{\omega^\theta}{r},
 \qquad
 \Phi=\frac{\psi^\theta}{r},
\]
and
\[
 \Delta_5=\partial_{rr}+\frac3r\partial_r+\partial_{zz},
 \qquad
 d\mu_5=r^3\,dr\,dz,
 \qquad
 \A=-\Delta_5.
\]
For the physical axisymmetric Navier--Stokes equations one has \cite{HouLi2008,HouWang2024}; related 5D-lifted model equations and their dissipation balance have also been studied in \cite{TaoWu2012}
\begin{align}
 F_t+b\cdot\nabla F&=\nu\Delta_5F+2\Phi_zF,\label{eq:physicalF}\\
 G_t+b\cdot\nabla G&=\nu\Delta_5G+\partial_z(F^2),\label{eq:physicalG}\\
 \A\Phi&=G,\label{eq:physicalPhi}
\end{align}
with
\begin{equation}\label{eq:physicalb}
 u^r=-r\Phi_z,
 \qquad
 u^z=2\Phi+r\Phi_r.
\end{equation}
The source $S=\partial_z(F^2)$ therefore passes through an independent parabolic state $G$ before reaching the velocity.

To separate transport geometry from source response, let
\begin{equation}\label{eq:bm}
 b_m[\Phi]=(-r\Phi_z,\,m\Phi+r\Phi_r),
 \qquad m\in\R.
\end{equation}
The radial stretching factor is independent of $m$:
\begin{equation}\label{eq:U}
 U:=\frac{u^r}{r}=-\Phi_z.
\end{equation}

\begin{proposition}\label{prop:divergences}
For every smooth $\Phi$,
\begin{equation}\label{eq:div3}
 \operatorname{div}_{3D}b_m=(m-2)\Phi_z,
\end{equation}
and
\begin{equation}\label{eq:div5m}
 \operatorname{div}_{\mu_5}b_m
 :=\partial_ru^r+\frac3r u^r+\partial_zu^z
 =(m-4)\Phi_z.
\end{equation}
Consequently, $m=2$ is the physical three-dimensional incompressible recovery, while $m=4$ is the weighted 5D-solenoidal recovery.
\end{proposition}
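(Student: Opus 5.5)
This is a direct computation, so I will compute both divergences term by term with $u^r=-r\Phi_z$ and $u^z=m\Phi+r\Phi_r$.

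For the physical three-dimensional divergence of an axisymmetric field without swirl component I use the cylindrical formula
\[
 \operatorname{div}_{3D}b=\frac1r\partial_r(ru^r)+\partial_zu^z .
\]
The radial piece is $\frac1r\partial_r(-r^2\Phi_z)=-2\Phi_z-r\Phi_{rz}$. The vertical piece is $\partial_z(m\Phi+r\Phi_r)=m\Phi_z+r\Phi_{rz}$. Adding them, the mixed derivatives $r\Phi_{rz}$ cancel, which is why smoothness (equality of mixed partials) is assumed, and the sum is $(m-2)\Phi_z$. This proves \eqref{eq:div3}.

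For the weighted divergence I use the definition in \eqref{eq:div5m}. It is the divergence with respect to $d\mu_5=r^3\,dr\,dz$, namely $r^{-3}\partial_r(r^3u^r)+\partial_zu^z$. The radial part is $r^{-3}\partial_r(-r^4\Phi_z)=-4\Phi_z-r\Phi_{rz}$. The vertical part is unchanged. After the same cancellation, the sum is $(m-4)\Phi_z$. I would also check the consistency remark that the weighted divergence equals the 3D one plus $\frac2r u^r=-2\Phi_z$, which gives the same answer.

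The consequence follows because both divergences vanish for every $\Phi$ exactly when the coefficient is zero. A nonzero constant times $\Phi_z$ is not identically zero for general $\Phi$. So $m=2$ is singled out by 3D incompressibility, and this matches \eqref{eq:physicalb}. Likewise $m=4$ is singled out by $\mu_5$-solenoidality.

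The only point needing care is regularity at the axis $r=0$. The identities are pointwise for $r>0$ and extend to $r=0$ by continuity for smooth $\Phi$. No real obstacle is expected.
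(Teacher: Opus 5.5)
Your computation is correct and is essentially the paper's proof. The paper expands $\partial_r u^r=-\Phi_z-r\Phi_{rz}$ and $\partial_z u^z=m\Phi_z+r\Phi_{rz}$ and then adds $u^r/r$ or $3u^r/r$, which is your conservative-form calculation $r^{-k}\partial_r(r^k u^r)+\partial_z u^z$ with $k=1,3$; you also make explicit why the ``consequence'' singles out $m=2$ and $m=4$, which the paper leaves implicit.
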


\begin{proof}
From \eqref{eq:bm},
\[
 \partial_ru^r=-\Phi_z-r\Phi_{rz},
 \qquad
 \partial_zu^z=m\Phi_z+r\Phi_{rz}.
\]
Adding $u^r/r=-\Phi_z$ gives
\[
 \partial_ru^r+\frac1r u^r+\partial_zu^z
 =(m-2)\Phi_z,
\]
which is \eqref{eq:div3}.  Adding instead $3u^r/r=-3\Phi_z$ gives \eqref{eq:div5m}.
\end{proof}

We now replace the causal subsystem \eqref{eq:physicalG}--\eqref{eq:physicalPhi} by an instantaneous response law
\begin{equation}\label{eq:response}
 \Phi=-\kappa\A^{-\beta}S,
 \qquad
 S=\partial_z(F^2),
 \qquad \beta>0,
\end{equation}
and evolve $F$ by
\begin{equation}\label{eq:Fmodel}
 F_t+b_m[\Phi]\cdot\nabla F
 =\nu\Delta_5F+2\Phi_zF.
\end{equation}
This produces the three structural parameters
\[
 m\quad\text{(transport geometry)},
 \qquad
 \kappa\quad\text{(response orientation)},
 \qquad
 \beta\quad\text{(elliptic depth)}.
\]

\begin{theorem}\label{thm:5denergy}
Let $(F,\Phi)$ be a sufficiently smooth decaying solution of \eqref{eq:response}--\eqref{eq:Fmodel}.  Then
\begin{equation}\label{eq:5denergy}
 \frac12\frac{d}{dt}\|F\|_{L^2(d\mu_5)}^2
 +\nu\|\nabla F\|_{L^2(d\mu_5)}^2
 =\frac{m\kappa}{2}
 \|\A^{-\beta/2}S\|_{L^2(d\mu_5)}^2.
\end{equation}
\end{theorem}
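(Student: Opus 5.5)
The plan is a direct $L^2(d\mu_5)$ energy computation: multiply \eqref{eq:Fmodel} by $F$, integrate against $d\mu_5=r^3\,dr\,dz$, and treat the diffusion, transport, and stretching terms separately. The single structural input is that the weight $r^3$ turns every operator in sight into a weighted divergence. Concretely, $\Delta_5=r^{-3}\nabla\cdot(r^3\nabla)$ in the meridional variables, $\operatorname{div}_{\mu_5}b=r^{-3}\nabla\cdot(r^3 b)$, and $\partial_z$ commutes with the $z$-independent weight. Because of the factor $r^3$, the boundary contributions at $r=0$ vanish for smooth axisymmetric functions, and the decay assumption removes those at infinity.

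First, the diffusion term. Integration by parts gives
\[
 \nu\int F\,\Delta_5F\,d\mu_5=-\nu\|\nabla F\|_{L^2(d\mu_5)}^2 .
\]
Next, the transport term. Write $b_m\cdot\nabla F\,F=\tfrac12 b_m\cdot\nabla(F^2)$ and integrate by parts in the weighted sense. Using \eqref{eq:div5m} from \cref{prop:divergences}, this yields
\[
 \int (b_m\cdot\nabla F)F\,d\mu_5=-\tfrac12\int(\operatorname{div}_{\mu_5}b_m)F^2\,d\mu_5=-\tfrac{m-4}{2}\int\Phi_zF^2\,d\mu_5 .
\]
Adding the stretching contribution $2\int\Phi_zF^2\,d\mu_5$ leaves the combined right-hand side
\[
 \Bigl(2+\tfrac{m-4}{2}\Bigr)\int\Phi_zF^2\,d\mu_5=\tfrac m2\int\Phi_zF^2\,d\mu_5 .
\]
It is this cancellation that makes the energy law depend on $m$ only through the single factor $m$.

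Finally, I rewrite the source term using the response law \eqref{eq:response}. Integrating by parts in $z$ gives
\[
 \int\Phi_zF^2\,d\mu_5=-\int\Phi\,\partial_z(F^2)\,d\mu_5=-\int\Phi S\,d\mu_5 .
\]
Substituting $\Phi=-\kappa\A^{-\beta}S$ turns this into $\kappa\ip{\A^{-\beta}S}{S}_{L^2(d\mu_5)}$. Since $\A=-\Delta_5$ is nonnegative and self-adjoint on $L^2(d\mu_5)$, this equals $\kappa\|\A^{-\beta/2}S\|_{L^2(d\mu_5)}^2$. Self-adjointness holds because $\A$ is unitarily equivalent, via the identification of axisymmetric functions on $\R^5$, to the Euclidean Laplacian, so $\A^{-\beta}$ is defined by Fourier multipliers. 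Collecting the three pieces gives \eqref{eq:5denergy}.

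The main obstacle I expect is functional-analytic rather than algebraic. I must ensure that $S$ lies in the domain of $\A^{-\beta/2}$, which is a low-frequency issue in five dimensions. I also must ensure that the pairing $\int\Phi S$ is legitimately symmetric, and that all boundary terms vanish. My approach is to lift to $\R^5$, regarding $F$, $\Phi$, and $S$ as radial-in-$(x_1,\dots,x_4)$ functions. Every integration by parts then becomes a standard one on $\R^5$, and the $r=0$ issue disappears. For the low-frequency issue, I note that $\widehat S=i\zeta\widehat{F^2}$ vanishes at the origin to first order. Since $|\xi|^{-2\beta}|\zeta|^2$ is locally integrable in $\R^5$ for the relevant $\beta$, the quantity $\|\A^{-\beta/2}S\|$ is finite under the standing hypothesis "sufficiently smooth decaying".
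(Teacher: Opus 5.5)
Your proposal is correct and follows the paper's proof step for step. You test against $F$ and use $\operatorname{div}_{\mu_5}b_m=(m-4)\Phi_z$ so that transport and stretching combine into $\frac m2\int\Phi_zF^2\,d\mu_5$, and you then rewrite this as $-\ip{\Phi}{S}=\kappa\|\A^{-\beta/2}S\|^2_{L^2(d\mu_5)}$ by self-adjointness of $\A$. Your extra justification (lifting to $\R^5$, and low-frequency integrability of $|\xi|^{-2\beta}|\zeta|^2$, which holds precisely for $\beta<7/2$ and so covers $\beta=1,2$) makes explicit what the paper leaves inside ``sufficiently smooth decaying.''
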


\begin{proof}
Write
\[
 I:=\int \Phi_zF^2\,d\mu_5.
\]
Multiplying \eqref{eq:Fmodel} by $F$ and integrating gives
\begin{equation}\label{eq:preenergy}
 \frac12\frac{d}{dt}\|F\|_2^2
 +\int F b_m\cdot\nabla F\,d\mu_5
 =-\nu\|\nabla F\|_2^2+2I.
\end{equation}
By \eqref{eq:div5m},
\[
 \int F b_m\cdot\nabla F\,d\mu_5
 =\frac12\int b_m\cdot\nabla(F^2)\,d\mu_5
 =-\frac{m-4}{2}I.
\]
Substitution in \eqref{eq:preenergy} yields
\begin{equation}\label{eq:mI}
 \frac12\frac{d}{dt}\|F\|_2^2+\nu\|\nabla F\|_2^2
 =\frac m2 I.
\end{equation}
Since $S=\partial_z(F^2)$,
\[
 I=\int\Phi_zF^2\,d\mu_5
 =-\int\Phi S\,d\mu_5.
\]
Using \eqref{eq:response} and the positive self-adjointness of $\A$,
\[
 -\ip{\Phi}{S}
 =\kappa\ip{\A^{-\beta}S}{S}
 =\kappa\|\A^{-\beta/2}S\|_2^2.
\]
Combining this identity with \eqref{eq:mI} proves \eqref{eq:5denergy}.
\end{proof}

\begin{proposition}\label{prop:scaling}
Under Navier--Stokes scaling
\begin{equation}\label{eq:Fscale}
 F_\lambda(r,z,t)=\lambda^2F(\lambda r,\lambda z,\lambda^2t),
\end{equation}
the viscous term in \eqref{eq:5denergy} scales like $\lambda$, while the response term scales like
\begin{equation}\label{eq:response-scale}
 \lambda^{5-2\beta}.
\end{equation}
Hence the instantaneous response is scale-critical relative to diffusion exactly when
\begin{equation}\label{eq:beta2}
 \beta=2.
\end{equation}
It is supercritical for $\beta<2$ and subcritical for $\beta>2$.
\end{proposition}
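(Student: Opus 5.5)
The plan is to compute directly how each term of \eqref{eq:5denergy} transforms under \eqref{eq:Fscale}. The two ingredients are the change of variables in the weighted measure $d\mu_5=r^3\,dr\,dz$ and the homogeneity of $\A=-\Delta_5$. Set $y=(\lambda r,\lambda z)$. Then $d\mu_5(r,z)=\lambda^{-5}d\mu_5(y)$, since the weight $r^3$ contributes $\lambda^{-3}$ and $dr\,dz$ contributes $\lambda^{-2}$. Moreover $\A$ is homogeneous of degree $2$: if $h_\lambda(x)=h(\lambda x)$, then $\A h_\lambda=\lambda^2(\A h)_\lambda$. By functional calculus, or via the Fourier representation of the radial 5D Laplacian on axisymmetric functions, this gives $\A^{s}h_\lambda=\lambda^{2s}(\A^{s}h)_\lambda$ for every real $s$ for which the power is defined on the decaying class.

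For the viscous term, $\nabla F_\lambda=\lambda^3(\nabla F)_\lambda$. Hence
\[
 \|\nabla F_\lambda\|_{L^2(d\mu_5)}^2=\lambda^{6}\lambda^{-5}\|\nabla F\|_{L^2(d\mu_5)}^2=\lambda\,\|\nabla F\|^2 ,
\]
with the time argument rescaled accordingly. As a check, $\|F_\lambda\|_2^2=\lambda^{4-5}\|F\|_2^2=\lambda^{-1}\|F\|_2^2$, and $\frac{d}{dt}$ contributes $\lambda^2$. The left side therefore scales uniformly like $\lambda$, consistent with the equation being invariant.

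For the response term, $S_\lambda=\partial_z(F_\lambda^2)=\lambda^{5}(S)_\lambda$. Then $\A^{-\beta/2}S_\lambda=\lambda^{5-\beta}(\A^{-\beta/2}S)_\lambda$, and so
\[
 \|\A^{-\beta/2}S_\lambda\|_2^2=\lambda^{10-2\beta-5}\|\A^{-\beta/2}S\|_2^2=\lambda^{5-2\beta}\|\A^{-\beta/2}S\|_2^2 ,
\]
which is \eqref{eq:response-scale}. Comparing exponents, the two sides balance precisely when $5-2\beta=1$, that is, \eqref{eq:beta2}.

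For the classification, the ratio of the response term to the viscous term is $\lambda^{4-2\beta}$. When $\beta<2$ this ratio grows as $\lambda\to\infty$, so the response dominates diffusion at small scales and is supercritical. When $\beta>2$ it decays, so the response is subcritical.

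The only step needing care is the homogeneity of $\A^{-\beta/2}$. The dilation commutation for fractional negative powers should be justified, for instance through the spectral representation $\A^{-s}=\Gamma(s)^{-1}\int_0^\infty \tau^{s-1}e^{-\tau\A}\,d\tau$ together with the heat-kernel scaling $e^{-\tau\A}h_\lambda=(e^{-\lambda^2\tau\A}h)_\lambda$. One must also confirm that $\A^{-\beta/2}S$ lies in $L^2(d\mu_5)$ for the decaying class. Since $S$ is a $z$-derivative, it has vanishing low-frequency mass, so this finiteness is plausible at least for $\beta$ not too large. I would state the scaling as an identity between finite quantities whenever they are defined.
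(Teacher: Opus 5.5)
Your proposal is correct and follows the paper's proof essentially step for step. In both, $d\mu_5$ contributes $\lambda^{-5}$, $\nabla F_\lambda$ and $S_\lambda$ carry amplitudes $\lambda^3$ and $\lambda^5$, and $\A^{-\beta/2}$ lowers the exponent by $\beta$, which gives $\lambda$ against $\lambda^{5-2\beta}$. Your heat-semigroup justification of the dilation rule for $\A^{-\beta/2}$ and your explicit ratio $\lambda^{4-2\beta}$ for the supercritical/subcritical dichotomy add detail that the paper only asserts.
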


\begin{proof}
The measure $d\mu_5$ scales as $\lambda^{-5}$.  Since $F_\lambda$ has amplitude $\lambda^2$, $\nabla F_\lambda$ has amplitude $\lambda^3$ and therefore
\[
 \|\nabla F_\lambda\|_2^2
 \sim\lambda^{6-5}=\lambda.
\]
The source $S=\partial_z(F^2)$ has amplitude $\lambda^5$.  Applying $\A^{-\beta/2}$ lowers the scaling exponent by $\beta$, so $\A^{-\beta/2}S$ has amplitude $\lambda^{5-\beta}$.  Its squared $L^2(d\mu_5)$ norm scales as
\[
 \lambda^{2(5-\beta)-5}=\lambda^{5-2\beta}.
\]
Equating this exponent with the viscous exponent $1$ gives \eqref{eq:beta2}.
\end{proof}

The physical response is dynamic rather than instantaneous, but its zero-temporal-frequency approximation is informative.  If transport and $G_t$ are frozen in \eqref{eq:physicalG}, then
\[
 \nu\A G\approx S,
 \qquad
 \Phi=\A^{-1}G\approx\nu^{-1}\A^{-2}S.
\]
In the convention \eqref{eq:response}, this is
\begin{equation}\label{eq:frozenphysical}
 (m,\kappa,\beta)=(2,-\nu^{-1},2).
\end{equation}
Thus the frozen physical response is critical and contributes with the dissipative sign in \eqref{eq:5denergy}.  This is a comparison of response orientation and depth, not a replacement of the physical evolution by an elliptic law.

The three parameters are summarized by
\begin{center}
\begin{tabular}{lccccl}
\toprule
response & $m$ & $\kappa$ & $\beta$ & energy sign & scaling \\
\midrule
frozen physical & $2$ & $-\nu^{-1}$ & $2$ & damping & critical \\
5D-solenoidal scalar closure & $4$ & $1$ & $1$ & pumping & supercritical \\
physical geometry, pumping response & $2$ & $1$ & $2$ & pumping & critical \\
\bottomrule
\end{tabular}
\end{center}
The table is not a phase diagram for the full Navier--Stokes equations.  It is an exact accounting identity for the model family \eqref{eq:response}--\eqref{eq:Fmodel}.

\section{A 5D-solenoidal one-inversion model}

We next isolate the pumping, one-inversion member of the response family as a modified equation in its own right.  Let
\begin{equation}\label{eq:oldW}
 \Delta_5W=\partial_z(F^2)
\end{equation}
and define
\begin{equation}\label{eq:b4}
 b_4[W]=(-rW_z,\,4W+rW_r).
\end{equation}
By \cref{prop:divergences}, $b_4$ is exactly divergence-free with respect to $d\mu_5$.  The scalar evolution is
\begin{equation}\label{eq:oldmodel}
 F_t+b_4[W]\cdot\nabla F
 =\nu\Delta_5F+2W_zF.
\end{equation}
Because \eqref{eq:oldW} is equivalent to $W=-\A^{-1}S$, this closure has $\kappa=1$, $\beta=1$, and $m=4$ at the response level.

\begin{proposition}\label{prop:oldenergy}
Every sufficiently smooth decaying solution of \eqref{eq:oldW}--\eqref{eq:oldmodel} satisfies
\begin{equation}\label{eq:oldenergy}
 \frac12\frac{d}{dt}\|F\|_{L^2(d\mu_5)}^2
 +\nu\|\nabla F\|_{L^2(d\mu_5)}^2
 =2\|\nabla W\|_{L^2(d\mu_5)}^2.
\end{equation}
The production term scales like $\lambda^3$ under \eqref{eq:Fscale}, whereas the viscous term scales like $\lambda$.
\end{proposition}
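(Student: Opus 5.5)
The plan is to obtain \eqref{eq:oldenergy} as the specialization $(m,\kappa,\beta)=(4,1,1)$ of \cref{thm:5denergy}, and then to rewrite the resulting production term in terms of $W$.

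\textbf{Step 1: identify the closure as a member of the family.} Equation \eqref{eq:oldW} says $\A W=-S$, that is, $W=-\A^{-1}S$. This is exactly \eqref{eq:response} with $\Phi=W$, $\kappa=1$ and $\beta=1$. The transport field \eqref{eq:b4} is $b_m[W]$ with $m=4$, and the stretching term $2W_zF$ coincides with $2\Phi_zF$ in \eqref{eq:Fmodel}. \Cref{thm:5denergy} then gives
\[
 \frac12\frac{d}{dt}\|F\|_{L^2(d\mu_5)}^2+\nu\|\nabla F\|_{L^2(d\mu_5)}^2
 =2\|\A^{-1/2}S\|_{L^2(d\mu_5)}^2 .
\]

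\textbf{Step 2: rewrite the production term.} By self-adjointness of $\A$ on $L^2(d\mu_5)$,
\[
 \|\A^{-1/2}S\|_2^2=\ip{\A^{-1}S}{S}=\ip{-W}{-\A W}=\ip{W}{\A W}.
\]
Since the divergence of $r^3\nabla W$ equals $r^3\Delta_5W$, integration by parts with respect to $d\mu_5=r^3\,dr\,dz$ gives
\[
 \ip{W}{\A W}=\int|\nabla W|^2\,d\mu_5 .
\]
Combining this with Step 1 yields \eqref{eq:oldenergy}.

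\textbf{Step 3: scaling.} I would compute directly under \eqref{eq:Fscale}. The source $S=\partial_z(F^2)$ has amplitude $\lambda^5$. Inverting $\A$ removes two powers, so $W$ has amplitude $\lambda^3$ and $\nabla W$ has amplitude $\lambda^4$. The measure contributes $\lambda^{-5}$, so $\|\nabla W\|_2^2$ scales as $\lambda^{8-5}=\lambda^3$. This agrees with $\lambda^{5-2\beta}$ at $\beta=1$ from \cref{prop:scaling}. The viscous term scales as $\lambda$, as already computed there.

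\textbf{Main obstacle.} The only delicate point is justifying the two integrations by parts, namely the self-adjointness of $\A$ and the weighted Green identity, for the "sufficiently smooth decaying" solutions. At the axis $r=0$, the boundary term carries the factor $r^3$ and vanishes for smooth axisymmetric (hence even in $r$) functions. At infinity I would use the decay of the five-dimensional Newtonian potential. Because $S$ is a $z$-derivative, $W$ decays at least like $|x|^{-3}$ in $\R^5$ and $\nabla W$ like $|x|^{-4}$. The product $W\nabla W$ therefore decays like $|x|^{-7}$, which beats the sphere area growth $|x|^4$, so the boundary terms vanish and $\nabla W\in L^2(d\mu_5)$.
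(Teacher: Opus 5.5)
Your proposal is correct and is essentially the paper's argument. The paper redoes the $(m,\kappa,\beta)=(4,1,1)$ case of \cref{thm:5denergy} directly: the transport term drops because $\operatorname{div}_{\mu_5}b_4=0$, and then $\int W_zF^2\,d\mu_5=-\int W\partial_z(F^2)\,d\mu_5=-\int W\Delta_5W\,d\mu_5=\int|\nabla W|^2\,d\mu_5$. This is the same computation as your rewriting $\|\A^{-1/2}S\|_2^2=\ip{W}{\A W}$, only without fractional powers of $\A$, and the paper likewise gets the scaling claim as the $\beta=1$ case of \cref{prop:scaling}, matching your direct count.
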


\begin{proof}
Since $\operatorname{div}_{\mu_5}b_4=0$, the transport term vanishes in the $L^2(d\mu_5)$ identity.  Thus
\[
 \frac12\frac{d}{dt}\|F\|_2^2+\nu\|\nabla F\|_2^2
 =2\int W_zF^2\,d\mu_5.
\]
Integration by parts in $z$ and \eqref{eq:oldW} give
\[
 \int W_zF^2\,d\mu_5
 =-\int W\,\partial_z(F^2)\,d\mu_5
 =-\int W\Delta_5W\,d\mu_5
 =\int|\nabla W|^2\,d\mu_5.
\]
This proves \eqref{eq:oldenergy}.  The scaling statement is the case $\beta=1$ of \cref{prop:scaling}.
\end{proof}

There is a simple one-dimensional diagnostic inside this modified equation.  Suppose formally that $F$ and $W$ are independent of the four transverse 5D radial coordinates and depend only on $z$.  Under decay,
\[
 W_{zz}=(F^2)_z
 \quad\Longrightarrow\quad
 W_z=F^2.
\]
Then \eqref{eq:oldmodel} becomes
\begin{equation}\label{eq:axialdiag}
 F_t+4WF_z=\nu F_{zz}+2F^3,
 \qquad W_z=F^2.
\end{equation}
For even, nonnegative data decreasing on $z>0$, one has $W\ge0$ and $F_z\le0$ on $z>0$, so
\[
 F_t-\nu F_{zz}=2F^3-4WF_z\ge2F^3.
\]
Thus the same closure that produces the positive energy law \eqref{eq:oldenergy} also contains an axial Fujita-type reaction mechanism.  The diagnostic is deliberately not promoted to a 5D singularity theorem: transverse independence has infinite 5D energy on the whole lifted space.  A localized 5D singular profile would require a separate construction and tail analysis.

The modified equation \eqref{eq:oldW}--\eqref{eq:oldmodel} is therefore a coherent response model with a sign-definite instability mechanism.  It should not be confused with the physical axisymmetric recovery: the physical source drives an independent vorticity state $G$ and uses the kinematic coefficient $m=2$, whereas \eqref{eq:oldmodel} uses instantaneous one-inversion response and $m=4$.

\section{Discussion}

The one-dimensional theorem and the 5D response identity point to the same structural issue from different directions.

First, the sign of the swirl-generated vorticity response changes the algebraic type of the Hou--Li reduction.  With the standard sign, the local quadratic form is encoded by one complex square.  With the reversed sign it factorizes over two real variables, and a large positive cone becomes a pair of scalar reaction-diffusion equations sharing an outward transport.  The viscous theorem shows that this is not an inviscid or codimension-one artifact: every nontrivial datum in the cone blows up for every positive viscosity.

Second, the domain normalization is part of the mechanism.  The periodic model carries the zero mode \eqref{eq:Ct}; the whole-line decay model does not.  This explains why an argument that integrates the vorticity equation and then discards the spatial constant can create a qualitatively different system.  The paper therefore keeps the whole-line theorem and the periodic Hou--Li theory logically separate.

Third, the 5D calculation identifies three independent response coordinates.  The coefficient $m$ decides which divergence law is enforced: $m=2$ is physical three-dimensional incompressibility, while $m=4$ is weighted 5D incompressibility.  The sign of $\kappa$ decides whether the instantaneous source response pumps or damps the $F$ energy.  The exponent $\beta$ determines the elliptic depth, with $\beta=2$ singled out by Navier--Stokes scaling.  The physical source-to-velocity chain also has temporal memory through $G$.  Replacing that chain by a scalar elliptic closure can therefore change four things at once: geometry, sign, depth, and memory.

The one-inversion model lies exactly on the pumping and supercritical side of the response family: \eqref{eq:oldenergy} gives a sign-definite energy source, while \cref{prop:scaling} shows that one elliptic inversion is supercritical relative to diffusion.  The physical response differs simultaneously in transport geometry, sign, temporal memory, and elliptic depth.

The connection with Tao's averaged model should be read at this structural level.  Tao proves that the energy identity and generic harmonic-analysis estimates do not determine regularity for the full 3D equation \cite{Tao2016}.  Here the result is much more specialized: it concerns a hierarchy of axisymmetric reductions and response closures.  Nevertheless, it shows that even after the familiar transport, stretching, viscosity, and scaling architecture has been retained, the orientation and causal depth of vorticity response can determine whether a positive singular mechanism is available.

A natural higher-dimensional problem is therefore precise: determine whether the 5D-solenoidal pumping model \eqref{eq:oldW}--\eqref{eq:oldmodel} possesses a localized finite-energy singular profile, or whether localization restores an obstruction absent from the axial diagnostic.  This is an existence-or-rigidity problem for the modified PDE itself.

\appendix

\section{Continuation inside the viscous cone}\label{app:continuation}

For completeness we record the continuation fact used in \cref{thm:main}.  The argument also explains why the theorem gives amplitude blowup rather than an unspecified loss of regularity.

\begin{proposition}\label{prop:continue}
Let $\nu>0$ and let $(a_+,a_-)$ be a smooth solution of \eqref{eq:twochannel} on $[0,T)$ satisfying the cone conclusions of \cref{lem:cone}.  If
\begin{equation}\label{eq:Astar}
 \sup_{0\le t<T}
 \bigl(\|a_+(t)\|_\infty+\|a_-(t)\|_\infty\bigr)<\infty,
\end{equation}
then the solution extends smoothly beyond $T$.
\end{proposition}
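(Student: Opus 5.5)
The idea is to turn the amplitude bound \eqref{eq:Astar} into bounds on every derivative on $[0,T)$, and then restart the local theory at a time close to $T$. Set $A:=\sup_{t<T}(\|a_+\|_\infty+\|a_-\|_\infty)$. The difficulty is that the transport velocity $V$ is not bounded in terms of $A$ alone, since $|V(x,t)|\le A|x|$ grows linearly in $x$. We therefore use the cone structure and the conservation form. By \cref{prop:Lp} with $p=1$, the mass satisfies $\frac{d}{dt}\int a_j\le 3A\int a_j$. Hence $\|a_j(t)\|_{L^1}\le e^{3AT}\|a_{j,0}\|_{L^1}$. Because $V$ is odd, nondecreasing and vanishes at $0$, this gives
\[
\|V(t)\|_\infty\le \tfrac12\bigl(\|a_+(t)\|_{L^1}+\|a_-(t)\|_{L^1}\bigr).
\]
So $V$ is uniformly bounded on $[0,T)$, and $V_x=a_++a_-$ is bounded by $A$.

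Next we get first-derivative bounds. We differentiate the equations; $h_\pm=(a_\pm)_x$ satisfies the linear equation \eqref{eq:hpm}, whose zeroth-order coefficient $a_\pm-a_\mp$ is bounded by $A$. The maximum principle, or an $L^2$/$L^\infty$ energy estimate in which the transport term contributes only $\tfrac12\int V_x h^2\le \tfrac A2\int h^2$, then gives
\[
\|h_\pm(t)\|_{L^2\cap L^\infty}\le e^{CAT}\|h_\pm(0)\|_{L^2\cap L^\infty}.
\]
Consequently $V_x$ is Lipschitz with a bound uniform on $[0,T)$. Higher derivatives follow by induction. Differentiating $k$ times, $\partial_x^k a_\pm$ solves a linear parabolic equation with transport $V$ and coefficients built from lower derivatives of $a_\pm$. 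The top-order commutator term is $kV_x\partial_x^k a$, which is bounded by $A$. Gronwall in $H^k$ then gives uniform $H^k$ bounds for all $k$. Parabolic smoothing with $\nu>0$ converts these into uniform bounds on time derivatives as well.

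Finally we restart. Uniform $H^k$ bounds make $a_\pm(t)$ Cauchy in $H^{k-1}$ as $t\uparrow T$, since $\partial_t a$ is bounded in $H^{k-2}$. So the limit $a_\pm(T)$ exists in every $H^k$, and the standard local existence theorem for the nonlocal parabolic system with data $a_\pm(T)$ extends the solution beyond $T$. One point needs care: $V$ is defined through $V(0,t)=0$ and is not decaying. The local theory should therefore be set up with $V$ bounded but nondecaying; this works because $a_++a_-\in L^1$ keeps $V$ bounded. Alternatively one can work with the decaying quantity $V-\tfrac12\operatorname{sgn}(x)\int(a_++a_-)$.

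The step I expect to be the main obstacle is the first one: controlling the nonlocal velocity $V$ uniformly in space. Without the $L^1$ bound coming from the cone (nonnegativity plus the mass identity), $V$ grows linearly, and the derivative estimates would only close with weights. If the $L^1$ route fails, for instance because the data are only assumed rapidly decaying rather than compactly supported, I would fall back on weighted estimates. These use the fact that $V$ is Lipschitz with constant $A$, so characteristics separate at most exponentially, and the commutator terms would be handled in weighted Sobolev spaces.
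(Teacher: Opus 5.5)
Your proposal is correct and follows essentially the paper's route. The cone and the $p=1$ mass identity bound $\int a_\pm$, and hence $\|V\|_\infty$. The linear equation \eqref{eq:hpm} controls first derivatives, the induction on $\partial_x^k a_\pm$ closes because the only top-order coefficient is $2a_\pm-kV_x$, and the local theory is then restarted. The differences are cosmetic: you run the higher-order induction with $H^k$ energy estimates instead of the paper's $L^\infty$ maximum principle, and parity gives you the slightly sharper bound $\|V\|_\infty\le\tfrac12\bigl(\|a_+\|_{L^1}+\|a_-\|_{L^1}\bigr)$.
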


\begin{proof}
Let the bound in \eqref{eq:Astar} be $A$.  We first control the masses
\[
 M_j(t)=\int_\R a_j(x,t)\,dx.
\]
Because the channels are nonnegative, \eqref{eq:mass-id} gives
\[
 M_j'(t)
 =2\int a_j^2+\int a_ja_k
 \le (2\|a_j\|_\infty+\|a_k\|_\infty)M_j(t)
 \le 3A M_j(t).
\]
Hence
\begin{equation}\label{eq:massbound}
 M_j(t)\le M_j(0)e^{3At}.
\end{equation}
Since $V(0,t)=0$ and $V_x=a_++a_-$,
\begin{equation}\label{eq:Vbound}
 \|V(t)\|_\infty
 \le M_+(t)+M_-(t),
\end{equation}
so the drift stays bounded on $[0,T)$.

Next set $h_j=(a_j)_x$.  Equation \eqref{eq:hpm} is linear in $h_j$ once $a_\pm$ and $V$ are fixed.  The maximum principle applied to $h_j$ and $-h_j$ gives
\begin{equation}\label{eq:hbound}
 \|h_j(t)\|_\infty
 \le \|h_j(0)\|_\infty e^{2At}.
\end{equation}
We now differentiate repeatedly.  Let $D=\partial_x$ and $w_{j,n}=D^na_j$.  Applying $D^n$ to the $j$th equation gives
\begin{equation}\label{eq:higher}
 (w_{j,n})_t+V(w_{j,n})_x
 =\nu(w_{j,n})_{xx}
 +(2a_j-nV_x)w_{j,n}+P_{j,n},
\end{equation}
where $P_{j,n}$ is a finite sum of products of spatial derivatives of $a_+$ and $a_-$ of order strictly less than $n$.  This form follows because the only term containing $D^{n+1}a_j$ is $V D^{n+1}a_j$, and the only additional order-$n$ contribution from differentiating $V(a_j)_x$ is $nV_xD^na_j$.

Assume inductively that all derivatives of both channels through order $n-1$ are bounded on $[0,T)\times\R$.  Then $P_{j,n}$ is bounded, while the coefficient $2a_j-nV_x$ is bounded by a constant depending only on $n$ and $A$.  The parabolic maximum principle applied to \eqref{eq:higher}, followed by Gronwall, yields a uniform bound for $D^na_j$ on $[0,T)$.  Starting from \eqref{eq:hbound}, induction controls every spatial derivative.  The equations then control time derivatives as well.

These bounds are precisely the a priori estimates required by the standard local parabolic existence theorem to restart the solution at times tending to $T$.  Therefore the solution extends beyond $T$.
\end{proof}

\section*{Statements and Declarations}
\paragraph{Competing interests.}
The author declares no competing interests.

\paragraph{Data availability.}
No datasets were generated or analyzed during the current study.

\paragraph{Use of generative AI.}
Gemini was used during manuscript preparation for exploratory algebraic checks, literature-navigation assistance, and language editing.  The author independently verified the mathematical statements, proofs, references, and final text and assumes full responsibility for the content.

\end{document}